\newtheorem{theorem}{Theorem}
\newtheorem{lemma}{Lemma}
\newtheorem*{remark}{Remark}
\title{A strange term coming from the boundary data.}
\author{Aaron Pim}
\date{}
\begin{document}
\maketitle
\begin{abstract}
In this paper, I derive the limiting behaviour of the solutions to Poisson's equation, in a perforated domain, subject to inhomogeneous Robin boundary conditions. In the first half of the paper, I derive a generalised limit for non-periodic domains and arbitrary boundary data. In the second half of this paper, I demonstrate that for periodically arranged spheres and identical Robin boundary data on each sphere, the homogenised limit of Poisson's equation satisfies the Helmholtz equation with an additional term in the domain data, which represents the contribution from the inhomogeneous Robin boundary data. These results are a generalisation of the work of Kaizu \cite{KAIZUS1985TRpo}, who derived the limit of the solutions to the homogeneous Robin problem.
\end{abstract}
\section{Introduction}
    Recently, the modelling of \textit{metamaterials} has been a major area of interest in material science. The non-classical behaviour, that the materials exhibit because of their microstructure, has major applications in technologies, such as enhanced photovoltaic cells, miniaturised antenna systems and lenses that surpass the diffractions limit \cite{YuPeng2019BMA,NakanoHisamatsu2017Lnam,PendryJ.B2000Nrma}. Media whose non-classical behaviours may be adjusted through externally tunable components are referred to as \textit{tunable} metamaterials. An example of materials that are tunable, are \textit{colloidal-nematic suspensions}, which are a mixture of microscopic, insoluble colloidal particles in a solvent of nematic liquid crystal. The colloidal particles interact with the nematic host through intermolecular forces, this phenomenon is called \textit{weak anchoring} and is typically represented by inhomogeneous Robin boundary conditions. \\
    Much research in the past few years has been dedicated to modelling inhomogeneous materials which behave like a \textit{homogeneous} metamaterial. For example, Cioranescu and Murat proved that the homogenised limit of solutions to Poisson's equation, subject to homogeneous Dirichlet conditions, satisfies the Helmholtz equation \cite{Strange_term}. The corresponding Helmholtz coefficient, denoted $\eta$, is referred to as ``strange term coming from nowhere'' and is defined in terms of the shape and relative size of the inclusions. This ``strange term'' corresponds to the non-classical behaviour observed in metamaterials. Cioranescu and Murat's result was further developed by Kaizu, who considered homogeneous Robin conditions and derived a similar term \cite{KAIZUS1985TRpo}. The main goal of this paper is to understand how the intermolecular forces between the inclusions and the host changes the effective behaviour of a nematic-colloidal suspension. To accomplish this, I shall consider the limit of the solutions of Poisson's equation subject to inhomogeneous Robin boundary conditions. 
\section{The homogenised limit of the Robin Problem}
    \subsection{Assumptions on the regularity of the inclusions and boundary data}
        For $N \in \mathbb{N}, N\geqslant 2$, let $\Omega \subset \mathbb{R}^N$ be a bounded domain, which denotes the domain in the absence of inclusions, with smooth boundary $\partial \Omega$ and unit normal $\mathbf{n}_{\Omega}:\partial \Omega \rightarrow \mathbb{S}^{N-1}$. Let the subset $\Omega_\epsilon \subset \Omega$ denote the domain with inclusions. I shall assume that $\Omega_\epsilon$ is such that the set of inclusions, given by 
        \begin{equation}\nonumber
            D_\epsilon := \Omega \setminus \Omega_\epsilon,    
        \end{equation}
        are such that the closure of $D_\epsilon$ does not intersect the boundary $\partial \Omega$, the interior of $D_\epsilon$ is non-empty with Lipschitz continuous boundary and 
        \begin{equation}\label{chapter 6: convergence of the characterstic function}
            \mathbf{1}_{\Omega_\epsilon}\rightarrow 1 \text{ in } L^2(\Omega) \text{ as }\epsilon \rightarrow 0,
        \end{equation}
        where $\mathbf{1}_{A}:\Omega \rightarrow \{0,1\}$ denotes the characteristic function of the set $A \subset \Omega$. As the boundary of $\Omega_\epsilon$ is Lipschitz continuous, this implies \cite[(Lemma 1.5.1.9)]{Grisvard} that there exists a sequence of constants $\delta_\epsilon >0$ and functions $\mu_\epsilon \in C^1(\overline{\Omega}_\epsilon)$ such that 
        \begin{equation}\label{defn of mu and delta}
            (\mu_\epsilon \cdot \mathbf{n})(\mathbf{x}) \geqslant \delta_\epsilon, \quad \forall \mathbf{x}\in \partial \Omega_\epsilon,
        \end{equation}
        where $\mathbf{n}:\partial \Omega_\epsilon \rightarrow \mathbb{S}^{N-1}$ is the outward pointing normal. I will assume that the inclusions $D_\epsilon$ are such that
        \begin{equation}\label{boundary assumption 1}
            \limsup_{\epsilon \rightarrow 0} \dfrac{\|\mu_\epsilon\|_{C^1(\Omega_\epsilon)}}{\delta_\epsilon}<\infty.
        \end{equation}
        The objective of this paper is to understand how inhomogeneous boundary conditions affect the limiting behaviour of the solutions to the Robin problem. Let the sequence $g^\epsilon \in L^2(\partial \Omega_\epsilon)$, $\epsilon>0$, denote the Robin boundary data along $\partial \Omega_\epsilon$, it is assumed that there exists a $g \in L^2(\partial \Omega)$ such that 
        \begin{equation}\label{convergence of the outer boundary data}
            g^\epsilon|_{\partial \Omega}\rightarrow g, \quad \text{ in }L^2(\partial \Omega).
        \end{equation}
        Additionally, let the convergent sequence $\alpha_\epsilon >0$ denote the sequence of Robin coefficients along $\partial D_\epsilon$, let $\alpha >0$ denote the Robin coefficient along $\partial \Omega$. I shall assume that $g_\epsilon$ and $\alpha_\epsilon$, are such that
        \begin{equation}\label{chapter 6: condition on g eps}
            \limsup\limits_{\epsilon \rightarrow 0}{\alpha_\epsilon \|g^\epsilon\|_{L^2(\partial D_\epsilon)}} < \infty
        \end{equation}
        If I were to consider the homogenous Robin problem, then I would be able to express Poisson's equation using a bilinear form. However, such a formulation does not exist because of the inhomogeneity in the boundary conditions, and consequently I consider the asymmetric functional $a^\epsilon:H^1(\Omega_\epsilon)\times H^1(\Omega_\epsilon)$ given by,
        \begin{equation}\nonumber
            a^\epsilon(v_1,v_2):=\int\limits_{\Omega_\epsilon}\nabla v_1 \cdot \nabla v_2 \ d\mathbf{x}+\alpha_\epsilon \int\limits_{\partial D_\epsilon}(v_1-g^\epsilon)v_2 \ dS_{\mathbf{x}}+\alpha \int\limits_{\partial \Omega}(v_1-g^\epsilon)v_2 \ dS_{\mathbf{x}}.
        \end{equation}
        It is clear that if $g^\epsilon =0$ then the $a^\epsilon$ is equal to the bilinear functional $B^\epsilon:H^1(\Omega)\times H^1(\Omega) \rightarrow \mathbb{R}$, given by
        \begin{equation}\nonumber
            B^\epsilon(v_1,v_2):=\int\limits_{\Omega_\epsilon}\nabla v_1 \cdot \nabla v_2 \ d\mathbf{x}+ \alpha_\epsilon \int\limits_{\partial D_\epsilon}v_1v_2 \ dS_{\mathbf{x}}+ \alpha \int\limits_{\partial \Omega}v_1v_2 \ dS_{\mathbf{x}}.
        \end{equation}
        $B^\epsilon$ corresponds to the bilinear formulation for the homogeneous problem, and I assume that the functional is coercive with constant coefficient $c_{\rm coer}>0$,
        \begin{equation}\label{chapter 6: lower bound on bilinear}
            B^\epsilon(v,v)\geqslant c_{\rm coer}\|u\|^2_{H^1(\Omega_\epsilon)}, \quad \forall u \in H^1(\Omega_\epsilon), \quad \forall \epsilon > 0.
        \end{equation}
        To derive the limiting behaviour of the solutions to the Robin problems, I shall extend the solution to Poisson's equation into the inclusions. I assume that there exists a family of extension maps $T^\epsilon \in \mathcal{L}\left( H^1(\Omega_\epsilon), H^1(\Omega)\right)$ such that
        \begin{equation}\label{chapter 6: extension assumption 1}
            \limsup\limits_{\epsilon \rightarrow 0}\|T^\epsilon\|_{\rm op} < \infty,
        \end{equation}
        \begin{equation}\label{chapter 6: bounded op implies converg}
            a^\epsilon(v^\epsilon, v^\epsilon) < \infty \quad \Rightarrow \quad \lim\limits_{\epsilon \rightarrow 0}\|T^\epsilon v^\epsilon - Z^\epsilon v^\epsilon\|_{L^2(\Omega)} \rightarrow 0.
        \end{equation}
        In the above assumption, the sequence $v^\epsilon \in H^1(\Omega_\epsilon)$, $\epsilon>0$, is such that $Z^\epsilon v^\epsilon$ is convergent in $L^2(\Omega)$, where the family of extension maps, denoted $Z^\epsilon:L^2(\Omega_\epsilon) \rightarrow L^2(\Omega)$, extends a function by zero into the inclusions. Additionally, I will assume that there exists a sequence $q^\epsilon \in W^{1,\infty}(\Omega_\epsilon)$, $\epsilon >0$, that satisfies
        \begin{align}
            q^\epsilon &=1 & \text{ on }\partial \Omega \label{chapter 6: BC of q} \\
            T^\epsilon q^\epsilon  &\rightharpoonup 1 & \text{ in }H^1(\Omega) \text{ as }\epsilon \rightarrow 0 \label{chapter 6: extension of q weak conv}
            \\
            a^\epsilon(q^\epsilon, q^\epsilon)&<\infty & \forall \epsilon >0. \label{chapter 6: a eps and q eps is finite}
        \end{align}
        Additionally, I assume $q^\epsilon$ is such that for all $v \in W^{1,1}(\Omega)$ and all sequences $\{v^\epsilon\}_{\epsilon>0} \subset W^{1,1}(\Omega)$ that weakly converge to $v$ in $W^{1,1}(\Omega)$, the expression
        \begin{equation}\label{chapter 6: eta eps defn}
            \int\limits_{\Omega_\epsilon}\nabla v^\epsilon \cdot \nabla q^\epsilon \ d \mathbf{x} + \alpha^\epsilon \int\limits_{\partial D_\epsilon} v^\epsilon q^\epsilon \ dS_{\mathbf{x}},
        \end{equation}
        converges and the limit is independent of the choice of $v^\epsilon$. I denote this limit by $\eta[v]$, which accounts for the ``strange term'' part in the behaviour of the Laplace operator when considering the homogeneous Robin problem. In the aforementioned paper, Kaizu proved that for $\alpha = \alpha^\epsilon >0$ and 
        \begin{equation}\label{chapter 6: example holes}
            D_\epsilon := \bigcup_{\mathbf{x}\in \mathbb{L}_\epsilon} \overline{B}_{r^\epsilon}(\mathbf{x}), \quad \mathbb{L}_\epsilon := \left\{\left.\mathbf{x}\in 2\epsilon \mathbb{Z}^2 \right| \text{dist}(\mathbf{x}, \partial \Omega) \geqslant \epsilon \right\},
        \end{equation}
        where $S:= \lim\limits_{\epsilon \rightarrow 0} |\mathbb{L}_{\epsilon}|r_\epsilon \in (0,\infty)$, then the functional $\eta$ is given by 
        \begin{equation}\label{kaizu eta}
            \eta[u] := \dfrac{\alpha S \omega_N}{|\Omega|} \int\limits_{\Omega}u \ d\mathbf{x},
        \end{equation}
        where $\omega_N$ is the surface area of the unit $N$-ball. To account for the inhomogeneous boundary conditions, I assume furthermore that $D_\epsilon$, $q^\epsilon$ and $g^\epsilon$ are such that for all $\zeta \in C^\infty(\overline{\Omega})$, the sequence
        \begin{equation}\label{chapter 6: assumption of gamma}
            \alpha^\epsilon \int\limits_{\partial D_\epsilon} \zeta  g^\epsilon q^\epsilon  \ dS_{\mathbf{x}},
        \end{equation}
        converges as $\epsilon \rightarrow 0$. Similar to $\eta$, I denote the limit of the above sequence by $\gamma:C^\infty(\overline{\Omega})\rightarrow \mathbb{R}$. It is clear that when the boundary data is identically zero, then the functional $\gamma$ is also zero, recovering the result which Kaizu derived.
    \subsection{The homogenised limits of the solutions to the inhomogeneous Robin problems in non-periodic domains}
        For a given $f \in L^2(\Omega)$, let the sequence $u^\epsilon \in H^1(\Omega_\epsilon)$, $\epsilon >0$, denote the solutions to the following equation
        \begin{align}
            -\Delta u^\epsilon &= f \text{ on }\Omega_\epsilon & \dfrac{\partial u^\epsilon}{\partial \mathbf{n}} + \alpha^\epsilon u^\epsilon &= \alpha^\epsilon g^\epsilon \text{ on } \partial D_\epsilon& \dfrac{\partial u^\epsilon}{\partial \mathbf{n}} + \alpha u^\epsilon &= \alpha g^\epsilon \text{ on } \partial \Omega. \nonumber
        \end{align}
        In this section, I shall derive the weak limit of the sequence of extensions $\left\{T^\epsilon u^\epsilon\right\}_{\epsilon >0}\subset H^1(\Omega)$ under the assumptions from the previous section. I shall begin by proving that there exists a weakly convergent subsequence of $T^\epsilon u^\epsilon$. Afterwards, the limit of this convergent subsequence, denoted $u \in H^1(\Omega)$, will be derived.
        \begin{lemma}\label{lemma proof of weakly convegent subsequence}
            Let the sequence $u^\epsilon \in H^1(\Omega_\epsilon)$, $\epsilon >0$, satisfy the weak form of Poisson's equation subject to inhomogeneous Robin boundary conditions,
            \begin{equation}\label{chapter 6: defn of u eps}
                a^\epsilon(u^\epsilon,v) = \int\limits_{\Omega_\epsilon}fv \ d\mathbf{x} \quad \forall v \in H^1(\Omega_\epsilon), \epsilon >0.
            \end{equation}
            The sequence $\{T^\epsilon u^\epsilon\}_{\epsilon>0} \subset H^1(\Omega)$ is such that,
            \begin{equation}\label{chapter 6: statement of bounded norm}
    	        \limsup\limits_{\epsilon \rightarrow 0}\|T^\epsilon u^\epsilon\|_{H^1(\Omega)}<\infty.
            \end{equation}
        \end{lemma}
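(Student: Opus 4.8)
The plan is to obtain a uniform energy estimate by testing the weak equation \eqref{chapter 6: defn of u eps} with $v=u^\epsilon$, and then to transfer that bound to the extensions via \eqref{chapter 6: extension assumption 1}. Putting $v=u^\epsilon$ and splitting the boundary terms of $a^\epsilon$ into their homogeneous and inhomogeneous parts gives
\begin{equation}\nonumber
  B^\epsilon(u^\epsilon,u^\epsilon) = \int\limits_{\Omega_\epsilon} f u^\epsilon \ d\mathbf{x} + \alpha_\epsilon \int\limits_{\partial D_\epsilon} g^\epsilon u^\epsilon \ dS_{\mathbf{x}} + \alpha \int\limits_{\partial \Omega} g^\epsilon u^\epsilon \ dS_{\mathbf{x}},
\end{equation}
so by the coercivity assumption \eqref{chapter 6: lower bound on bilinear} it suffices to bound the right-hand side by $C\|u^\epsilon\|_{H^1(\Omega_\epsilon)}$ with $C$ independent of $\epsilon$; Young's inequality and absorption then give $\|u^\epsilon\|_{H^1(\Omega_\epsilon)}\leqslant C/c_{\rm coer}$, and \eqref{chapter 6: extension assumption 1} yields $\|T^\epsilon u^\epsilon\|_{H^1(\Omega)}\leqslant \|T^\epsilon\|_{\rm op}\|u^\epsilon\|_{H^1(\Omega_\epsilon)}\leqslant C$, which is the claim.

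The first term is immediate from Cauchy--Schwarz, $\left|\int_{\Omega_\epsilon} f u^\epsilon\right|\leqslant \|f\|_{L^2(\Omega)}\|u^\epsilon\|_{H^1(\Omega_\epsilon)}$. For the integral over $\partial\Omega$, since the closure of $D_\epsilon$ stays away from $\partial\Omega$ the trace map $H^1(\Omega_\epsilon)\to L^2(\partial\Omega)$ is bounded uniformly in $\epsilon$ (equivalently, factor through $T^\epsilon$ and apply the trace theorem on the fixed smooth domain $\Omega$), while $\|g^\epsilon\|_{L^2(\partial\Omega)}$ is bounded by \eqref{convergence of the outer boundary data}; hence this term is $\leqslant C\|u^\epsilon\|_{H^1(\Omega_\epsilon)}$ as well.

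The crux is the term $\alpha_\epsilon\int_{\partial D_\epsilon} g^\epsilon u^\epsilon\,dS_{\mathbf{x}}$, which I estimate by $\alpha_\epsilon\|g^\epsilon\|_{L^2(\partial D_\epsilon)}\|u^\epsilon\|_{L^2(\partial D_\epsilon)}$. The first factor is controlled by \eqref{chapter 6: condition on g eps}, so what remains is a trace inequality on $\partial D_\epsilon$ with a constant uniform in $\epsilon$, and this is exactly where the vector fields $\mu_\epsilon$ and the constants $\delta_\epsilon$ from \eqref{defn of mu and delta} enter. Applying the divergence theorem to $|u^\epsilon|^2\mu_\epsilon$ on $\Omega_\epsilon$ and using $\mu_\epsilon\cdot\mathbf{n}\geqslant\delta_\epsilon$ on $\partial\Omega_\epsilon$,
\begin{equation}\nonumber
  \delta_\epsilon\|u^\epsilon\|^2_{L^2(\partial D_\epsilon)}\leqslant \delta_\epsilon\|u^\epsilon\|^2_{L^2(\partial\Omega_\epsilon)}\leqslant \int\limits_{\Omega_\epsilon}\bigl(|u^\epsilon|^2\operatorname{div}\mu_\epsilon + 2u^\epsilon\,\mu_\epsilon\cdot\nabla u^\epsilon\bigr)\ d\mathbf{x}\leqslant 2\|\mu_\epsilon\|_{C^1(\Omega_\epsilon)}\|u^\epsilon\|^2_{H^1(\Omega_\epsilon)},
\end{equation}
so $\|u^\epsilon\|_{L^2(\partial D_\epsilon)}\leqslant \bigl(2\|\mu_\epsilon\|_{C^1(\Omega_\epsilon)}/\delta_\epsilon\bigr)^{1/2}\|u^\epsilon\|_{H^1(\Omega_\epsilon)}$, and the prefactor is bounded by \eqref{boundary assumption 1}. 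Combining the three estimates gives $B^\epsilon(u^\epsilon,u^\epsilon)\leqslant C\|u^\epsilon\|_{H^1(\Omega_\epsilon)}$ and hence the uniform bound.

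I expect the only genuinely delicate point to be the uniform-in-$\epsilon$ trace bound on $\partial D_\epsilon$: one has to check that the constant produced by the divergence-theorem computation depends on $\epsilon$ only through the ratio in \eqref{boundary assumption 1}, with the clean inequality $\|u^\epsilon\|_{L^2}^2+2\|u^\epsilon\|_{L^2}\|\nabla u^\epsilon\|_{L^2}\leqslant 2\|u^\epsilon\|_{H^1}^2$ doing the bookkeeping. Everything else --- the choice of test function, the coercivity of $B^\epsilon$, the trace on the fixed boundary $\partial\Omega$, and the final absorption argument --- is a routine assembly of the hypotheses collected in Section 2.1 together with Cauchy--Schwarz and Young's inequality. (Existence of $u^\epsilon$ itself is presupposed by the hypothesis of the lemma; if desired it can be obtained by Lax--Milgram applied to the shifted bilinear problem $B^\epsilon(\,\cdot\,,\,\cdot\,)$ with the $g^\epsilon$-terms moved to the load.)
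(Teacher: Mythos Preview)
Your proof is correct and follows essentially the same strategy as the paper's: test with $v=u^\epsilon$, use the coercivity of $B^\epsilon$, bound the $g^\epsilon$-terms by Cauchy--Schwarz, control the resulting trace norm on $\partial\Omega_\epsilon$ via the fields $\mu_\epsilon$ and the ratio in \eqref{boundary assumption 1}, and finish with the operator bound on $T^\epsilon$. The only cosmetic difference is that the paper invokes Grisvard's trace inequality (Theorem~1.5.1.10) directly, whereas you reproduce its proof by applying the divergence theorem to $|u^\epsilon|^2\mu_\epsilon$; the paper also handles both pieces of $\partial\Omega_\epsilon$ at once with that inequality rather than treating $\partial\Omega$ separately via the fixed-domain trace.
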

        \begin{proof}
            The assumption from equation (\ref{chapter 6: extension assumption 1}) implies that the limit supremum of the operator norm of $T^\epsilon$ is bounded. Consequently, the following inequality holds
            \begin{equation}\label{bound 1, proof 1}
                \|T^\epsilon u^\epsilon\|_{H^1(\Omega)}\leqslant \|T^\epsilon\|_{\rm op}\|u^\epsilon\|_{H^1(\Omega_\epsilon)}.
            \end{equation}
            Thus to prove the bound in equation (\ref{chapter 6: statement of bounded norm}), I seek an upper bound for the sequence $\|u^\epsilon\|_{H^1(\Omega_\epsilon)}$. The definition of $a^\epsilon$ is such that the functional may be expressed as the sum of the bilinear functional $B^\epsilon$ and a boundary integral. As a consequence of the assumption from equation (\ref{chapter 6: lower bound on bilinear}) and the Cauchy-Schwarz inequality, the following bound is obtained
            \begin{align}
                a^\epsilon(u^\epsilon,u^\epsilon) &\geqslant c_{\rm coer}\|u^\epsilon\|^2_{H^1(\Omega_\epsilon)} - \alpha_\epsilon\|g^\epsilon\|_{L^2(\partial D_\epsilon)}\|u^\epsilon\|_{L^2(\partial D_\epsilon)}- \alpha\|g^\epsilon\|_{L^2(\partial \Omega)}\|u^\epsilon\|_{L^2(\partial \Omega)}, \nonumber \\
                a^\epsilon(u^\epsilon,u^\epsilon) &\geqslant c_{\rm coer}\|u^\epsilon\|^2_{H^1(\Omega_\epsilon)} - \left(\alpha_\epsilon\|g^\epsilon\|_{L^2(\partial D_\epsilon)}+ \alpha\|g^\epsilon\|_{L^2(\partial \Omega)}\right)\|u^\epsilon\|_{L^2(\partial \Omega_\epsilon)}.\label{proof 1 equation 1}
            \end{align}
            I wish to construct an upper bound for $\|u^\epsilon\|_{L^2(\partial \Omega_\epsilon)}$ in terms of $\|u^\epsilon\|_{H^1(\Omega_\epsilon)}$. To achieve this I utilise the following trace theorem \cite[(Theorem 1.5.1.10)]{Grisvard},
            \begin{equation}\label{proof 1 equation 2}
                \|v\|^2_{L^2(\partial \Omega_\epsilon)}\leqslant \dfrac{\|\mu_\epsilon \|_{C^1(\overline{\Omega}_\epsilon)}}{\delta_\epsilon}\left(\phi\|\nabla v\|_{L^2(\Omega_\epsilon)}^2+\left(1+\dfrac{1}{\phi} \right)\|v\|_{L^2(\Omega_\epsilon)}^2 \right),\quad \forall v \in H^1(\Omega_\epsilon), \ \phi \in (0,1),
            \end{equation}
            where $\delta_\epsilon>0$ and $\mu_\epsilon \in C^1(\Omega_\epsilon)$ are defined in equation (\ref{defn of mu and delta}). The function $u^\epsilon$ is the solution to the Poisson problem, and consequently equation (\ref{chapter 6: defn of u eps}); thus, by applying the Cauchy-Schwarz inequality, I may construct an upper bound for $a^\epsilon(u^\epsilon,u^\epsilon)$, given by
            \begin{equation}\nonumber
                a^\epsilon(u^\epsilon,u^\epsilon) \leqslant \|f\|_{L^2(\Omega_\epsilon)}\|u^\epsilon\|_{H^1(\Omega_\epsilon)}.
            \end{equation}
            Thus applying the inequalities from equations (\ref{bound 1, proof 1}), (\ref{proof 1 equation 1}) and (\ref{proof 1 equation 2}), the following bound is obtained
            \begin{equation}\nonumber
                c_{\rm coer}\|T^\epsilon u^\epsilon\|_{H^1(\Omega_\epsilon)} \leqslant \|T^\epsilon\|_{\rm op} \left( \|f\|_{L^2(\Omega_\epsilon)} + \left(\alpha_\epsilon\|g^\epsilon\|_{L^2(\partial D_\epsilon)}+ \alpha\|g^\epsilon\|_{L^2(\partial \Omega)}\right)\sqrt{1+\dfrac{1}{\phi}}\sqrt{\dfrac{\|\mu_\epsilon \|_{C^1(\overline{\Omega}_\epsilon)}}{\delta_\epsilon}}\right).
            \end{equation}
            Thus by the assumptions from equations (\ref{boundary assumption 1}), (\ref{convergence of the outer boundary data}), (\ref{chapter 6: condition on g eps}) and (\ref{chapter 6: extension assumption 1}), it is clear that equation (\ref{chapter 6: statement of bounded norm}) holds.
            \end{proof}
            \begin{remark}
                As a consequence of the above lemma there exists a function $u \in H^1(\Omega)$ and a subsequence $\{T^{\epsilon_n}u^{\epsilon_n}\}_{n=1}^\infty \subset \{T^{\epsilon}u^{\epsilon}\}_{\epsilon >0} \subset H^1(\Omega)$ that satisfies:
                \begin{equation}\nonumber
                	T^{\epsilon_n}u^{\epsilon_n}\rightharpoonup u \text{ in }H^1(\Omega) \text{ as }n \rightarrow \infty.
                \end{equation}
                For tractability in the proof of theorem \ref{chapter 6: theorem proof of main result}, I shall use the following notation:
                \begin{align*}
                    \tilde{u}_n &:= T^{\epsilon_n} u^{\epsilon_n},& u_n &:= u^{\epsilon_n}, &\tilde{q}_n &:= T^{\epsilon_n} q^{\epsilon_n},& q_n &:= q^{\epsilon_n},& a_n &:= a^{\epsilon_n},\\ \Omega_n &:= \Omega_{\epsilon_n},& D_n &:= D_{\epsilon_n},&Z_n&:=Z^{\epsilon_n},&g_n &:= g^{\epsilon_n},&\alpha_n &:=\alpha_{\epsilon_n}.
                \end{align*}
            \end{remark}
        \begin{theorem}\label{chapter 6: theorem proof of main result}
            Let the sequence $u^\epsilon \in H^1(\Omega_\epsilon)$, $\epsilon >0$, satisfy the weak Robin problems, given in equation (\ref{chapter 6: defn of u eps}). The function sequence $\tilde{u}_n$ weakly converges in $H^1(\Omega)$ to the function $u \in H^1(\Omega)$ what satisfies the following integral identity:
            \begin{equation}\label{chapter 6: defn of u}
                0 = \int\limits_{\Omega}\left(\nabla u\cdot \nabla \zeta - f\zeta \right) d \mathbf{x}+\alpha\int\limits_{\partial \Omega}(u-g)\zeta \ dS_{\mathbf{x}} + \eta[u\zeta]-\gamma[\zeta], \qquad \forall \zeta \in C^{\infty}(\overline{\Omega}).
            \end{equation}
        \end{theorem}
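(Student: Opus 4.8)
The plan is to adapt the oscillating test-function (corrector) method of Cioranescu--Murat and Kaizu to the abstract hypotheses above. Fix $\zeta\in C^\infty(\overline{\Omega})$ and work throughout along the subsequence of Lemma~\ref{lemma proof of weakly convegent subsequence} and the subsequent remark, on which $\tilde u_n\rightharpoonup u$ in $H^1(\Omega)$. Since $q_n\in W^{1,\infty}(\Omega_n)$, the product $\zeta q_n$ is an admissible test function in \eqref{chapter 6: defn of u eps}, so that $a_n(u_n,\zeta q_n)=\int_{\Omega_n}f\zeta q_n\,d\mathbf{x}$. Expanding $\nabla(\zeta q_n)=\zeta\nabla q_n+q_n\nabla\zeta$ and using the definition of $a_n$, the left-hand side becomes the sum of four pieces,
\begin{equation*}
(\mathrm{I})=\int_{\Omega_n}\zeta\,\nabla u_n\cdot\nabla q_n\,d\mathbf{x}+\alpha_n\!\int_{\partial D_n}\!\zeta u_n q_n\,dS,\qquad (\mathrm{II})=\int_{\Omega_n}q_n\,\nabla u_n\cdot\nabla\zeta\,d\mathbf{x},
\end{equation*}
\begin{equation*}
(\mathrm{III})=-\alpha_n\!\int_{\partial D_n}\!\zeta g_n q_n\,dS,\qquad (\mathrm{IV})=\alpha\!\int_{\partial\Omega}\!(u_n-g_n)\zeta q_n\,dS,
\end{equation*}
and I would pass to the limit in each piece separately.

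The crux is term $(\mathrm{I})$, which is not literally of the form appearing in \eqref{chapter 6: eta eps defn}, since the expression there involves $\nabla(\zeta u_n)\cdot\nabla q_n$ rather than $\zeta\nabla u_n\cdot\nabla q_n$. Writing $\zeta\nabla u_n=\nabla(\zeta\tilde u_n)-\tilde u_n\nabla\zeta$ on $\Omega_n$ and $u_n=\tilde u_n$ on $\partial D_n$ gives
\begin{equation*}
(\mathrm{I})=\Big(\int_{\Omega_n}\nabla(\zeta\tilde u_n)\cdot\nabla q_n\,d\mathbf{x}+\alpha_n\int_{\partial D_n}\zeta\tilde u_n q_n\,dS\Big)-\int_{\Omega_n}\tilde u_n\,\nabla\zeta\cdot\nabla q_n\,d\mathbf{x}.
\end{equation*}
The bracketed quantity is exactly the expression in \eqref{chapter 6: eta eps defn} evaluated at $v^\epsilon=\zeta\tilde u_n$; since $\tilde u_n\rightharpoonup u$ in $H^1(\Omega)$ and $\zeta,\nabla\zeta$ are bounded, $\zeta\tilde u_n\rightharpoonup\zeta u$ in $H^1(\Omega)$ (the cross term $\tilde u_n\nabla\zeta$ converging strongly in $L^2(\Omega)$ by the compact embedding $H^1(\Omega)\hookrightarrow L^2(\Omega)$), hence weakly in $W^{1,1}(\Omega)$, so by hypothesis the bracket converges to $\eta[\zeta u]=\eta[u\zeta]$. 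For the remaining term I would combine $\nabla\tilde q_n\rightharpoonup 0$ in $L^2(\Omega)$ (from \eqref{chapter 6: extension of q weak conv}) with the strong convergence $\mathbf{1}_{\Omega_n}\tilde u_n\nabla\zeta\to u\nabla\zeta$ in $L^2(\Omega)$, the latter following from $\mathbf{1}_{\Omega_n}\tilde u_n=Z_n u_n\to u$ in $L^2(\Omega)$ (a consequence of \eqref{chapter 6: bounded op implies converg} together with $\tilde u_n\to u$ in $L^2(\Omega)$); a weak--strong pairing then gives $\int_{\Omega_n}\tilde u_n\nabla\zeta\cdot\nabla q_n\,d\mathbf{x}\to 0$, and hence $(\mathrm{I})\to\eta[u\zeta]$. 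Term $(\mathrm{III})$ converges to $-\gamma[\zeta]$ directly from hypothesis \eqref{chapter 6: assumption of gamma}.

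The remaining passages are routine weak--strong limits. Writing $(\mathrm{II})=\int_\Omega(\mathbf{1}_{\Omega_n}\nabla\tilde u_n)\cdot(\tilde q_n\nabla\zeta)\,d\mathbf{x}$ and pairing $\mathbf{1}_{\Omega_n}\nabla\tilde u_n\rightharpoonup\nabla u$ in $L^2(\Omega)$ (from $\nabla\tilde u_n\rightharpoonup\nabla u$ and \eqref{chapter 6: convergence of the characterstic function}) against $\tilde q_n\nabla\zeta\to\nabla\zeta$ strongly in $L^2(\Omega)$ (since $\tilde q_n\to 1$ in $L^2(\Omega)$ by \eqref{chapter 6: extension of q weak conv} and Rellich) yields $(\mathrm{II})\to\int_\Omega\nabla u\cdot\nabla\zeta\,d\mathbf{x}$, and the same argument gives $\int_{\Omega_n}f\zeta q_n\,d\mathbf{x}\to\int_\Omega f\zeta\,d\mathbf{x}$ on the right-hand side. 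For $(\mathrm{IV})$ I would use $q_n\equiv 1$ on $\partial\Omega$ from \eqref{chapter 6: BC of q}, the fact that the inclusions remain at positive distance from $\partial\Omega$ so that $u_n$ and $\tilde u_n$ share the same trace there, the compactness of the trace map $H^1(\Omega)\to L^2(\partial\Omega)$ (so that $\tilde u_n|_{\partial\Omega}\to u|_{\partial\Omega}$ strongly in $L^2(\partial\Omega)$), and \eqref{convergence of the outer boundary data}, to obtain $(\mathrm{IV})\to\alpha\int_{\partial\Omega}(u-g)\zeta\,dS$. Equating the sum of the four limits with $\int_\Omega f\zeta\,d\mathbf{x}$ and rearranging then yields exactly \eqref{chapter 6: defn of u}.

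I expect the only genuinely delicate step to be the manipulation of $(\mathrm{I})$: one must recognise that $\int_{\Omega_n}\nabla(\zeta\tilde u_n)\cdot\nabla q_n$ conceals the cross term $\int_{\Omega_n}\tilde u_n\nabla\zeta\cdot\nabla q_n$, separate it off, and show it is asymptotically negligible, and one must verify that $v^\epsilon=\zeta\tilde u_n$ is an admissible competitor in \eqref{chapter 6: eta eps defn}, i.e.\ that it converges weakly in $W^{1,1}(\Omega)$ to $\zeta u$ (which uses only boundedness of $\zeta$ and $\nabla\zeta$ together with Rellich compactness). A minor technical point is the assertion that the extension preserves the trace on $\partial\Omega$; this is harmless since $\overline{D_\epsilon}$ lies at positive distance from $\partial\Omega$, so a whole neighbourhood of $\partial\Omega$ is contained in $\Omega_\epsilon$ and is left untouched by any reasonable extension operator.
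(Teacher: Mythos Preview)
Your proposal is correct and follows essentially the same corrector/oscillating-test-function strategy as the paper: you test \eqref{chapter 6: defn of u eps} with $\zeta q_n$, split the result into the same four pieces (up to a harmless regrouping of the $f$-term), handle the $\eta$-piece via the product-rule identity $\zeta\nabla u_n=\nabla(\zeta\tilde u_n)-\tilde u_n\nabla\zeta$ and a weak--strong pairing against $\nabla\tilde q_n\rightharpoonup 0$, and invoke \eqref{chapter 6: assumption of gamma}, \eqref{chapter 6: BC of q}, \eqref{convergence of the outer boundary data} for the remaining terms exactly as the paper does. The only cosmetic difference is that for your term $(\mathrm{II})$ you pair $\mathbf{1}_{\Omega_n}\nabla\tilde u_n\rightharpoonup\nabla u$ against $\tilde q_n\nabla\zeta\to\nabla\zeta$ directly, whereas the paper routes through $Z_nq_n$ and assumption \eqref{chapter 6: bounded op implies converg}; both are valid and yield the same limit.
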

        \begin{proof}
            For an arbitrary test function $\zeta \in C^\infty(\overline{\Omega})$, consider the expression $a_n(u_n, q_n \zeta)$. Applying the definition of $u^\epsilon$, from equation (\ref{chapter 6: defn of u eps}), I deduce that
            \begin{equation}\label{chapter 6: un bilin prob}
                \begin{split}
                    0 = \int\limits_{\Omega_n}q_n\left(\nabla u_n \cdot \nabla \zeta - f \zeta \right)\ d\mathbf{x} + \alpha\int\limits_{\partial \Omega}q_n(u_n-g_n)\zeta \ dS_{\mathbf{x}} \\ + \left(\int\limits_{\Omega_n}\zeta \nabla u_n \cdot \nabla q_n \ d\mathbf{x} + \alpha_n\int\limits_{\partial D_n} u_n q_n \zeta \ dS_{\mathbf{x}}\right)-\alpha_n\int\limits_{\partial D_n} g_n q_n \zeta \ dS_{\mathbf{x}}.
                \end{split}
            \end{equation}
            I wish to derive the limit of this expression as $n\rightarrow \infty$.
            \subsubsection*{First term}
            Consider the first term:
            \begin{align}
            	\int\limits_{\Omega_n}q_n\left(\nabla u_n\cdot \nabla \zeta - f\zeta \right) d \mathbf{x} = & \int\limits_{\Omega}Z_nq_n\left(\nabla \tilde{u}_n\cdot \nabla \zeta - f\zeta \right) d \mathbf{x} \nonumber \\
                \int\limits_{\Omega_n}q_n\left(\nabla u_n\cdot \nabla \zeta - f\zeta \right) d \mathbf{x} =  & \int\limits_{\Omega}\tilde{q}_n\left(\nabla \tilde{u}_n\cdot \nabla \zeta - f\zeta \right) d \mathbf{x}  + \int\limits_{\Omega}\left( Z_nq_n -\tilde{q}_n \right)\left(\nabla \tilde{u}_n\cdot \nabla \zeta - f\zeta \right) d \mathbf{x}. \nonumber
            \end{align}
            As $u$ is the weak limit of $\tilde{u}_n$ in $H^1(\Omega)$, this implies that $\nabla \tilde{u}_n\cdot \nabla \zeta$ strongly converges to $\nabla u\cdot \nabla \zeta$ in $L^2(\Omega)$. Thus, as an immediate consequence of equation (\ref{chapter 6: extension of q weak conv}) I have that
            \begin{equation}\nonumber
                \lim\limits_{n \rightarrow \infty} \int\limits_{\Omega}\tilde{q}_n\left(\nabla \tilde{u}_n\cdot \nabla \zeta - f\zeta \right) d \mathbf{x} = \int\limits_{\Omega}\left(\nabla u\cdot \nabla \zeta - f\zeta \right) d \mathbf{x}.
            \end{equation}
            Additionally, as an immediate consequence of equations (\ref{chapter 6: a eps and q eps is finite}) and (\ref{chapter 6: bounded op implies converg}), I have that
            \begin{equation}\nonumber 
                \int\limits_{\Omega}\left( Z_nq_n -\tilde{q}_n \right)\left(\nabla \tilde{u}_n\cdot \nabla \zeta - f\zeta \right) d \mathbf{x} = 0.
            \end{equation}
            Thus I have deduced the limit of the first term
            \begin{equation}\nonumber
            	\lim\limits_{n\rightarrow \infty}\int\limits_{\Omega_n}q_n\left(\nabla u_n \cdot \nabla \zeta - f \zeta \right)\ d\mathbf{x} = \int\limits_{\Omega}\left(\nabla u \cdot \nabla \zeta - f \zeta \right)\ d\mathbf{x}.
            \end{equation}
            \subsubsection*{Second term}
            I shall now consider the second term in equation (\ref{chapter 6: un bilin prob}), as an immediate consequence of the boundary condition of $q_n$, given in equation (\ref{chapter 6: BC of q}), I have that
            \begin{equation}\nonumber
                \alpha \int\limits_{\partial \Omega}(u_n-g_n)q_n\zeta \ dS_{\mathbf{x}} = \alpha \int\limits_{\partial \Omega}(u_n-g_n)\zeta \ dS_{\mathbf{x}}.
            \end{equation}
            The definition as $u_n \rightharpoonup u$ in $H^1(\Omega)$ this implies that $\text{Tr}(u_n) \rightharpoonup \text{Tr}(u)$ in $H^{1/2}(\partial \Omega)$. Additionally, the assumption from equation (\ref{convergence of the outer boundary data}) implies that the limit of the second term is given by 
            \begin{equation}\nonumber
                \alpha\lim\limits_{n \rightarrow \infty }\int\limits_{\partial \Omega}(u_n-g_n)q_n\zeta \ dS_{\mathbf{x}} = \alpha\int\limits_{\partial \Omega}(u-g)\zeta \ dS_{\mathbf{x}}.
            \end{equation}
            \subsubsection*{Third term}
            I shall now consider the third term in equation (\ref{chapter 6: un bilin prob}),
            \begin{equation}\nonumber 
                \int\limits_{\Omega_n}\zeta \nabla u_n \cdot \nabla q_n \ d \mathbf{x} = \int\limits_{\Omega}\mathbf{1}_{\Omega_n}\zeta  \nabla \tilde{u}_n \cdot \nabla \tilde{q}_n \ d \mathbf{x}.
            \end{equation}
            I now use the identity that $\mathbf{1}_{\Omega_n} \zeta \nabla \tilde{u}_n = \mathbf{1}_{\Omega_n}\left(\nabla (\zeta \tilde{u}_n)  - \tilde{u}_n \nabla \zeta \right)$ to deduce that
            \begin{equation}\nonumber
                \int\limits_{\Omega_n}\zeta \nabla u_n \cdot \nabla q_n \ d \mathbf{x} = \int\limits_{\Omega}\mathbf{1}_{\Omega_n}\nabla (\zeta \tilde{u}_n) \cdot \nabla \tilde{q}_n \ d \mathbf{x} - \int\limits_{\Omega}\mathbf{1}_{\Omega_n}\tilde{u}_n \nabla \zeta \cdot \nabla \tilde{q}_n \ d \mathbf{x}.
            \end{equation}
            From equation (\ref{chapter 6: extension of q weak conv}), I have that $\nabla \tilde{q}_n \rightharpoonup \underline{0}$ in $L^2(\Omega,\mathbb{R}^2)$. Additionally, as $u$ is the weak limit of $u^\epsilon$, I may use the assumption from equation (\ref{chapter 6: convergence of the characterstic function}) to deduce that
            \begin{equation}\nonumber
                \mathbf{1}_{\Omega_n}\tilde{u}_n \nabla \zeta \rightarrow u \nabla \zeta \text{ in }L^2(\Omega, \mathbb{R}^N).
            \end{equation}
            Thus I have that
            \begin{equation}\label{chapter 6: bitch 4}
                \lim\limits_{n \rightarrow \infty}\int\limits_{\Omega_n}\zeta \nabla u_n \cdot \nabla q_n \ d \mathbf{x} = \lim\limits_{n \rightarrow \infty}\int\limits_{\Omega}\mathbf{1}_{\Omega_n}\nabla (\zeta \tilde{u}_n) \cdot \nabla \tilde{q}_n \ d \mathbf{x}.
            \end{equation}
            As $\tilde{u}_n \rightharpoonup u$ in $H^1(\Omega)$ it follows that $\zeta \tilde{u}_n \rightharpoonup \zeta u$ in $W^{1,1}(\Omega)$. Thus, I deduce the following limit, using the assumption from equation (\ref{chapter 6: eta eps defn}),
            \begin{equation}\nonumber
                \lim\limits_{n \rightarrow \infty}\left(\int\limits_{\Omega}\mathbf{1}_{\Omega_n}\nabla (\zeta \tilde{u}_n) \cdot \nabla \tilde{q}_n \ d \mathbf{x} + \alpha_n\int\limits_{\partial D_n} \zeta u_n q_n\ dS_{\mathbf{x}} \right) = \eta[\zeta u].
            \end{equation}
            \subsubsection*{Final term}
            Similarly, the assumption from equation (\ref{chapter 6: assumption of gamma}) implies that the fourth term is
            \begin{equation}\nonumber
                \lim\limits_{n\rightarrow \infty}\alpha_n \int\limits_{\partial D_n} q_n g_n\zeta  \ dS_{\mathbf{x}} = \gamma[\zeta] \in (C^\infty)^*(\overline{\Omega}).
            \end{equation}
            Therefore, equation \ref{chapter 6: defn of u} holds, which concludes the proof.
        \end{proof}
\section{The homogenised limit in periodic domains}\label{periodic arragned balls section}
\subsection{Derivation of the auxiliary function}
    In the previous section, I investigated the limiting behaviour of the solutions to inhomogeneous Poisson problems, in non-periodic domains. The derived limit is similar to the limit of the solutions to homogeneous Poisson problems, but contains the additional term $\gamma$, which represents the effect of the boundary data $g^\epsilon$ on the macroscopic behaviour. However, it is not clear, in the non-periodic case, how $\gamma$ affects the solution $u$. Therefore, to demonstrate the difference between the limits of the homogeneous and inhomogeneous problems, I shall consider a set of periodically arranged balls of radius $r_\epsilon < \epsilon$ defined in equation (\ref{chapter 6: example holes}). Let the positions of the centres of the balls be denoted $\mathbb{L}_\epsilon \subset \Omega$, additionally the periodic arrangement of the balls is assumed to satisfy
    \begin{equation}\nonumber
        S_N := \lim\limits_{\epsilon \rightarrow 0}|\mathbb{L}_{\epsilon}|r_\epsilon^{N-1}\in (0,\infty), \quad |\mathbb{L}_\epsilon|\sim \dfrac{|\Omega|}{(2\epsilon)^N}, \quad r_\epsilon \sim \epsilon^{\frac{N}{N-1}}.
    \end{equation}
    Finally, I shall assume that the sequence of Robin parameters $\alpha^\epsilon$ of the inclusion boundaries $\partial D_\epsilon$ is equal to the Robin parameter $\alpha \geqslant 0$ of the boundary $\partial \Omega$. The above assumptions are similar to the ones made by Kaizu, in his derivation of the limit of the solutions of the Homogenous problems. Thus, the ``strange term'' functional $\eta$ is given by equation (\ref{kaizu eta}), the objective of this section is to quantify the limit functional $\gamma$. Thus, I require an explicit expression for the trace of $q^\epsilon$ on $\partial D_\epsilon$; similar to Murat and Cioranescu \cite{Strange_term}, I let $q^\epsilon$ satisfy:
    \begin{align*}
        q_\epsilon &=1 \quad \text{ in }\Omega \setminus B_\epsilon(\mathbb{L}_\epsilon),\\
        -\Delta q_\epsilon &=0 \quad \text{ in }B_\epsilon(\mathbb{L}_\epsilon) \setminus \overline{B}_{r_\epsilon}(\mathbb{L}_\epsilon),\\
        \dfrac{\partial q_\epsilon}{\partial \mathbf{n}}+ \alpha q_\epsilon &=0 \quad \text{ on } \partial B_{r_\epsilon}(\mathbb{L}_\epsilon),
    \end{align*}
    Solving the above equation in the neighbourhood of an arbitrary point in $\mathbb{L}_\epsilon$, it is clear that the function $q_\epsilon$ is given by
    \begin{equation}\nonumber
        q_\epsilon(\mathbf{x}) =\left\{\begin{array}{cl}
            \dfrac{1 + \alpha r_\epsilon \log \left(|\mathbf{x}-\hat{\mathbf{x}}|/r_\epsilon\right)}{1 + \alpha r_\epsilon \log \left(\epsilon/r_\epsilon\right)} & \forall \mathbf{x} \in  B_\epsilon(\hat{\mathbf{x}}) \setminus B_{r_\epsilon}(\hat{\mathbf{x}}) , \ \hat{\mathbf{x}}\in \mathbb{L}_\epsilon, \ N=2 \\
            & \\
            \dfrac{(N-2)+\alpha r_\epsilon \left(1-(r_\epsilon/|\mathbf{x}-\hat{\mathbf{x}}|)^{N-2}\right)}{(N-2)+\alpha r_\epsilon \left(1-(r_\epsilon/\epsilon)^{N-2}\right)} & \forall \mathbf{x} \in  B_\epsilon(\hat{\mathbf{x}}) \setminus B_{r_\epsilon}(\hat{\mathbf{x}}) , \ \hat{\mathbf{x}}\in \mathbb{L}_\epsilon, \ N\geqslant 3 \\
            & \\
            1 & \forall \mathbf{x} \in \Omega \setminus B_\epsilon(\hat{\mathbf{x}}), \ \hat{\mathbf{x}}\in \mathbb{L}_\epsilon, \\
        \end{array}\right.
    \end{equation}
   The definition of the functional $\gamma$ requires the evaluation of $q_\epsilon$ along the boundary of each inclusion. In the case of the ball, $q_\epsilon|_{\partial D_\epsilon}$ is a constant, denoted $q_\epsilon^\partial$, dependent on the radius $r_\epsilon$, the minimum distance $\epsilon$ and the dimension $N$.
    \begin{equation}\label{expansion of q epsi}
        q^{\partial}_\epsilon :=\left\{\begin{array}{cc}
            \left(1+ \alpha r_\epsilon \log \dfrac{\epsilon}{r_\epsilon} \right)^{-1} & N=2 \\
            & \\
            \left(1 + \dfrac{\alpha r_\epsilon}{N-2}  \left(1-\left(\dfrac{r_\epsilon}{\epsilon} \right)^{N-2}\right)\right)^{-1} & N\geqslant 3
        \end{array} \right.
    \end{equation}
    Therefore, for an arbitrary $\zeta \in C^\infty(\overline{\Omega})$, the functional $\gamma[\zeta]$ is defined as the limit of the following
    \begin{equation}\label{sequence of gamma}
        \alpha q_\epsilon^\partial  r_\epsilon^{N-1}\sum\limits_{\hat{\mathbf{x}} \in \mathbb{L}_\epsilon} \ \int\limits_{\mathbb{S}^{N-1}} \left(g^\epsilon \zeta \right)(\hat{\mathbf{x}}+r_\epsilon \mathbf{m}) \ d\mathbf{m}.
    \end{equation}
\subsection{Derivation of the functional $\gamma$}
\subsubsection{Assumption of identical boundary data}
    To quantify the above limit I shall consider the case when the boundary data is identical along the boundary of each inclusion, that there exists a sequence of functions $h^\epsilon \in L^2(\mathbb{S}^{N-1})$ such that 
    \begin{equation}\nonumber
        h^\epsilon(\mathbf{m}) = g^\epsilon(\hat{\mathbf{x}}+r_\epsilon \mathbf{m}), \quad \forall \epsilon >0, \ \hat{\mathbf{x}}\in \mathbb{L}_{\epsilon}, \mathbf{m}\in \mathbb{S}^{N-1}.
    \end{equation}
    As a consequence of the above assumptions, it is clear that the functional $\gamma$ is given by 
    \begin{equation}
        \gamma[\zeta]=\dfrac{\alpha S \omega_N}{|\Omega|}\overline{h}\int\limits_\Omega \zeta \ d\mathbf{x},
    \end{equation}
    where $\overline{h}$ is the limit of the mean of $h^\epsilon$,
    \begin{equation}\nonumber
        \overline{h}:=\lim\limits_{\epsilon \rightarrow 0} \dfrac{1}{\omega_N}\int\limits_{\mathbb{S}^{N-1}}h^\epsilon \ d\mathbf{m}.
    \end{equation}
    As a consequence the limit of the solutions of the inhomogeneous problems, denoted $u \in H^1(\Omega)$, satisfies
    \begin{equation}\label{periodic inclusions inhomogenous limit}
        \left(\dfrac{\alpha \omega_N S}{|\Omega|}-\Delta \right)u = f + \dfrac{\alpha \omega_N S}{|\Omega|}\overline{h} \text{ on }\Omega, \quad \dfrac{\partial u}{\partial \mathbf{n}}+\alpha u = \alpha g \text{ on }\partial \Omega.
    \end{equation}
    Thus the introduction of the boundary data $g^\epsilon$ results in a non-trivial contribution to the domain data $f$.
\subsubsection{Comparison with the limit of the homogenous problem}
    To compare this problem to the result derived by Kaizu, I consider the case when the boundary data is identically given by the constant $g^\epsilon = c_g \in \mathbb{R}$ on $\partial \Omega_\epsilon$. Applying the above result, it is clear that in this case the function $u \in H^1(\Omega)$ would satisfy equation (\ref{periodic inclusions inhomogenous limit}) with $\overline{h}=c_g$. However, one can also consider the limit of the function $v^\epsilon:=u^\epsilon-c_g$; that satisfies the homogeneous problem. As $v^\epsilon \in H^1(\Omega)$ satisfies the homogeneous problem, the result derived by Kaizu implies that the limit $v^\epsilon \rightharpoonup v$, satisfies 
    \begin{equation}\nonumber
        \left(\dfrac{\alpha \omega_N S}{|\Omega|}-\Delta \right)v = f \text{ on }\Omega, \quad \dfrac{\partial v}{\partial \mathbf{n}}+\alpha v = 0 \text{ on }\partial \Omega.
    \end{equation}
    If one substitutes $\tilde{u}:= v+c_g$ it is cleat that $\tilde{u}\in H^1(\Omega)$ satisfies the same equation as $u$, and therefore they are identical.
\section{Conclusion and Future Work}
    In Lemma \ref{lemma proof of weakly convegent subsequence}, I demonstrate that there exists a weakly convergent subsequence of the solutions to the inhomogeneous Robin problem. In Theorem \ref{chapter 6: theorem proof of main result}, I prove that the limit of the weakly convergent subsequence satisfies equation (\ref{chapter 6: defn of u}), where the functional $\eta$ is the ``strange term'' derived by Kaizu, and the functional $\gamma$ is defined as the limit of the sequence of functional's given in equation (\ref{chapter 6: assumption of gamma}). The functional $\gamma$ represents the contribution of the inhomogeneous boundary data in the limit. In Section \ref{periodic arragned balls section}, I consider a sequence of periodically arranged circular inclusions and a sequence of Robin boundary data, denoted $h^\epsilon$, which is identical on each inclusion. These conditions are identical to the ones considered by Kaizu, and I demonstrate that the functional $\gamma$ is a constant contribution to the domain data $f$ that is dependent on the limit of the mean of $h^\epsilon$. I then demonstrated that for the specific subcase of constant boundary conditions, the result matches Kaizu's. \\
    The ability to control the domain data of the effective behaviour of a metamaterial is a significant result which can be expanded upon, with the aim to introduce a controllable non-constant contribution to the domain data $h$. I believe that this may be achieved by considering non-periodically arranged balls and applying the Radon-Nikod{\'y}m Theorem to deduce the limit of the non-uniform distribution of the inclusion centre's. This problem may correspond to clustering of the inclusions, which occurs in nematic-colloidal suspensions \cite{MondalSourav2018Nfcs}. \\ Another avenue of future research would be to consider the homogenised limits of the inhomogeneous Dirichlet and Neumann problems. The limit of the homogeneous solutions of these problems have been considered by the likes of Cioranescu and Murat; but, similar to the Robin problem, the inhomogeneous boundary data may introduce novel effects. \\
    The solution of the limiting problem $u$, is the weak limit of a convergent subsequence. A point of further research would be to strengthen this result and to potentially derive a rate of convergence. This research would be based off of the work of Cherednichenko, Dondl and Rösler \cite{DondlPatrick2017NCiP}, who prove that the resolvent of the Laplace operator $-\Delta|_{\Omega_\epsilon}$ converges, in the sense of the operator norm, to $\eta - \Delta$ for periodically arranged inclusions and homogeneous Dirichlet, Neumann and Robin boundary conditions.
\printbibliography
\end{document}